\documentclass[12pt]{amsart}
\usepackage{amsfonts,latexsym,amssymb,amsthm,
}

\newtheorem{theorem}{Theorem}

\newtheorem{proposition}[theorem]{Proposition}

\theoremstyle{definition}

\theoremstyle{remark}





\DeclareMathOperator{\COV}{COV}
\DeclareMathOperator{\cov}{cov}

\newcommand{\m}{\mathfrak}

 %


\title
[weakly initially compact implies mpcap]
{Every weakly initially ${\m m}$-compact topological space is  ${\m m}$pcap}

\author{Paolo Lipparini} 
\address{Dipartimento di Matematica\\
Vialis Ricerc\ae Scientific\ae\\
II Universit\`a di Roma (Tor Vergata)\\
I-00133 ROME 
ITALY}

\urladdr{http://www.mat.uniroma2.it/{\~{}}lipparin}

\keywords{Weak initial compactness, mpcap, $[ \mu ,\kappa ]$-compactness, pseudo-$(\kappa,  \lambda )$-compactness}

\subjclass[2000]{54D20,
54A20}

\begin{document}

\begin{abstract} 
The statement in the title solves a problem raised by T. Retta.
We also present a variation of the result in terms of $[ \mu ,\kappa ]$-compactness.
\end{abstract}

\maketitle  

Let ${\m m}$ be an infinite cardinal. 
A topological space is \emph{weakly initially ${\m m}$-compact} 
if and only if 
 every 
open  cover  of  cardinality  $\leq {\m m}$  has  a  finite  subset   with  a  dense  union.

A topological space $X$ 
   is  said   to 
be 
\emph{${\m m}$pcap} \cite{Re} 
if  every  family  of  
$\leq {\m m}$
  open  sets  in  X   has  a  complete  accumulation  point,  i. e.,   a 
point  each  neighborhood  of  which  meets  $\kappa$   members  of  the  family,  where  $\kappa$  is  the 
cardinality  of  the  family.
The acronym
 ${\m m}$pcap stands for
\emph{${\m m}$-pseudocompact   in  the  sense   of  complete   accumulation    points}.

The next Theorem solves the last problem in  \cite{Re}.

\begin{theorem} \label{solret}
For every infinite cardinal ${\m m}$,
every weakly initially ${\m m}$-compact topological space is  ${\m m}$pcap.
\end{theorem}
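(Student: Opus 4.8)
The plan is to rephrase ``$p$ is a complete accumulation point of $\mathcal U$'' as membership of $p$ in a distinguished intersection of closed sets, and then to confront that intersection with the closed‑set reformulation of weak initial $\m m$‑compactness.

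First I would set up notation. Let $\mathcal U=\{U_\alpha:\alpha<\kappa\}$ be a family of open sets; deleting the empty set and repetitions we may take the $U_\alpha$ nonempty and pairwise distinct, with $\kappa=|\mathcal U|$ infinite and $\kappa\le\m m$. For $S\in[\kappa]^{<\kappa}$ put $F_S:=\overline{\bigcup_{\alpha\in\kappa\setminus S}U_\alpha}$. Unwinding the definition, a point $p$ is a complete accumulation point of $\mathcal U$ precisely when, for every open $V\ni p$, the set $\{\alpha<\kappa:V\cap U_\alpha\neq\emptyset\}$ is contained in no member of $[\kappa]^{<\kappa}$; that is, $p$ is a complete accumulation point of $\mathcal U$ iff $p\in F_S$ for all $S\in[\kappa]^{<\kappa}$, so the set of complete accumulation points of $\mathcal U$ equals $\bigcap_{S\in[\kappa]^{<\kappa}}F_S$. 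The key elementary observation is that $F_{S_1}\cap\dots\cap F_{S_n}\supseteq F_{S_1\cup\dots\cup S_n}\supseteq U_\beta\neq\emptyset$ for any $\beta\notin S_1\cup\dots\cup S_n$ (such $\beta$ exists because $|S_1\cup\dots\cup S_n|<\kappa$); dually, the open sets $X\setminus F_S$ constitute a family no finite subfamily of which has dense union.

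For regular $\kappa$ this already settles the theorem. Indeed $\{\gamma:\gamma<\kappa\}$ is then cofinal in $([\kappa]^{<\kappa},\subseteq)$, so $\bigcap_{S}F_S=\bigcap_{\gamma<\kappa}F_\gamma$ with $F_\gamma=\overline{\bigcup_{\gamma\le\alpha<\kappa}U_\alpha}\supseteq U_\gamma$; this is a decreasing family of at most $\m m$ closed sets, so the intersection of any finite subfamily is one of the $F_\gamma$ and has nonempty interior. Were $\mathcal U$ to have no complete accumulation point, $\{X\setminus F_\gamma:\gamma<\kappa\}$ would be an open cover of $X$ of cardinality $\le\m m$ with no finite subfamily of dense union — contradicting weak initial $\m m$‑compactness.

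The hard part, which I expect to carry the weight of the argument, is singular $\kappa$, where $([\kappa]^{<\kappa},\subseteq)$ need no longer have a cofinal subset of cardinality $\le\m m$. Here I would argue by induction on $\kappa$: take $\kappa$ least for which the theorem fails, so that, by the regular case, in every weakly initially $\m m$‑compact space every open family of fewer than $\kappa$ sets has a complete accumulation point. Let $X,\mathcal U$ witness failure at $\kappa$; we may assume $\bigcup\mathcal U$ dense, on replacing $X$ by $\overline{\bigcup\mathcal U}$, which is regular closed in $X$, hence again weakly initially $\m m$‑compact, and has the same complete accumulation points for $\mathcal U$. Write $\kappa=\sup_{i<\cf\kappa}\kappa_i$ with the $\kappa_i$ regular. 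Then the sets $X^{i}:=\bigcup\{V\text{ open}:V\text{ meets at most }\kappa_i\text{ of the }U_\alpha\}$ form an increasing open cover of $X$ of cardinality $\cf\kappa\le\m m$ (absence of a complete accumulation point puts each point into some $X^i$), so weak initial $\m m$‑compactness forces some $X^{i_0}$ to be dense; set $\mu:=\kappa_{i_0}<\kappa$. Next I would pass into the regular closed subspaces $\overline W$, where $W$ runs over a maximal pairwise disjoint family of nonempty open sets each meeting at most $\mu$ of the $U_\alpha$ (so $\overline W$ is weakly initially $\m m$‑compact and carries a restricted subfamily of $\mathcal U$ of cardinality $\le\mu<\kappa$, to which the inductive hypothesis applies), thereby producing a supply of \emph{local} complete accumulation points, one for each $W$ in a family whose union is dense in $X$. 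The delicate point — the real content of the proof — is to amalgamate these into a single complete accumulation point of $\mathcal U$ on $X$; equivalently, to produce, under the assumption that $\mathcal U$ has no complete accumulation point, a subfamily of $\{F_S:S\in[\kappa]^{<\kappa}\}$ of cardinality $\le\m m$ with empty intersection, so that the cover argument of the regular case can be rerun. It is at this step that the finer structure of singular cardinals (and, presumably, the $[\mu,\kappa]$‑compactness considerations alluded to in the abstract) must be brought to bear.
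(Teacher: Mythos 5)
Your reduction of the problem and your treatment of regular $\kappa$ are correct: identifying the set of complete accumulation points of $\mathcal U$ with $\bigcap_{S\in[\kappa]^{<\kappa}}F_S$, observing that finite intersections of the $F_S$ have nonempty interior, and, for regular $\kappa$, extracting the cofinal chain $F_\gamma=\overline{\bigcup_{\gamma\le\alpha<\kappa}U_\alpha}$ of length $\kappa\le\m m$ so that the cover $\{X\setminus F_\gamma\}$ contradicts weak initial $\m m$-compactness. But the proof is not complete: for singular $\kappa$ you only offer a programme (induction on $\kappa$, the dense $X^{i_0}$, local accumulation points on the closures $\overline W$), and you yourself flag that the amalgamation of the local accumulation points into a global one --- equivalently, the extraction of a subfamily of $\{F_S\}$ of size $\le\m m$ with the same intersection --- is missing. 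That step is the whole theorem in the singular case, and it is far from clear that the route you sketch can be completed: a complete accumulation point of the trace of $\mathcal U$ on $\overline W$, a family of size $\le\mu<\kappa$, only guarantees that neighborhoods meet $\mu$ many of the $U_\alpha$, and nothing in the sketch forces $\kappa$ many.

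The paper avoids the regular/singular dichotomy entirely by a reindexing device you should note. Reindex the given family as $(O_F)_{F\in S_\omega(\kappa)}$, where $S_\omega(\kappa)$ is the set of finite subsets of $\kappa$ (possible since $|S_\omega(\kappa)|=\kappa$), and for each $\alpha\in\kappa$ set $C_\alpha=\overline{\bigcup\{O_F\mid \alpha\in F\}}$. This is a family of only $\kappa$ closed sets, every finite intersection $\bigcap_{\alpha\in F}C_\alpha$ contains the nonempty open set $O_F$, so weak initial $\kappa$-compactness yields a point $x\in\bigcap_{\alpha\in\kappa}C_\alpha$. If some neighborhood $U$ of $x$ met fewer than $\kappa$ of the $O_F$, the union $\bigcup H$ of the corresponding finite sets $F$ would have cardinality $<\kappa$ --- and this is the point: a union of fewer than $\kappa$ \emph{finite} sets has size $<\kappa$ whether $\kappa$ is regular or singular --- so one could choose $\alpha\notin\bigcup H$ and conclude $U\cap C_\alpha=\emptyset$, contradicting $x\in C_\alpha$. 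In effect the family $(C_\alpha)_{\alpha\in\kappa}$ plays the role of the small subfamily of $\{F_S\}$ you were unable to produce. If you want to salvage your write-up, replace the inductive attack on singular $\kappa$ with this reindexing; your regular-case argument then becomes an unnecessary special case.
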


Before proving the theorem, we recall some known facts about the notions involved
in its statement.

The notion of weak initial ${\m m}$-compactness has been introduced by Frol{\'{\i}}k \cite{Fr} under the name
\emph{almost ${\m m}$-compactness}, and has been studied by various authors 
under various names, such as 
\emph{weak-$ {\m m} $-$ \aleph_0$-compactness}, or
 \emph{$\mathcal O$-$ [ \omega , {\m m} ]$-compactness}.
See \cite{Lst} for references.
By taking complements, it is trivial to see that
a topological space $X$ is weakly initially ${\m m}$-compact 
if and only if  the following holds.
For every sequence $( C _ \alpha ) _{ \alpha \in {\m m}  } $
 of closed sets of $X$, if,
for every finite $F \subseteq {\m m} $,
there exists a nonempty open set $O_F$ of $X$   such that   
$ \bigcap _{ \alpha \in F}  C_ \alpha \supseteq O_F$,
then  
$ \bigcap _{ \alpha \in {\m m}}  C_ \alpha \not= \emptyset $.

A topological space is said to be 
\emph{pseudo-$(\kappa,  \lambda )$-compact} \cite{CNcc}
if and only if for every 
$\lambda$-indexed sequence 
$(O_ \alpha ) _{ \alpha \in \lambda } $
of nonempty open sets of $X$, there is 
$x \in X$ such that, for every neighborhood $U$ of $x$, 
$|\{ \alpha \in \lambda \mid U \cap O_ \alpha \not= \emptyset \}| \geq \kappa $.   

T. Retta \cite[Theorem 3(d)]{Re} proved that
 a  space   is  ${\m m}$pcap   if and only if   it  is  pseudo-($\kappa$, $\kappa$)-compact     for  each  $\kappa \leq {\m m}$.

 \begin{proof}[Proof of the theorem] 
If $ \kappa \leq{\m m}$, then trivially every
weakly initially ${\m m}$-compact topological space
is  weakly initially $ \kappa $-compact.
Thus if we prove that, for every infinite cardinal 
$\kappa$, every weakly initially $ \kappa $-compact topological space
is 
pseudo-$(\kappa,  \kappa  )$-compact, then
we have that  
every
weakly initially ${\m m}$-compact topological space
is 
pseudo-$(\kappa,  \kappa  )$-compact, 
for every $ \kappa \leq{\m m}$,
and we are done by the mentioned result from 
\cite[Theorem 3(d)]{Re}.

Hence let $X$ be a weakly initially $ \kappa $-compact topological space, 
and let $(O_ \alpha ) _{ \alpha \in \kappa  } $ be a sequence of
 nonempty open sets of $X$. Let 
$S_ \omega ( \kappa )$ be the set of all finite subsets of $\kappa$.
Since  $|S_ \omega ( \kappa )|= \kappa $, we can reindex the sequence $(O_ \alpha ) _{ \alpha \in \kappa  } $
as $(O_ F ) _{ F \in S_ \omega ( \kappa )} $.
For every $\alpha \in \kappa $, let
$C_ \alpha = \overline{  \bigcup  \{ O_F \mid  F \in S_ \omega  ( \kappa  ),  \alpha \in F \}}$. For every finite subset $F$ of $\kappa$,
we have that  
$ \bigcap _{ \alpha \in F } C_ \alpha $
contains the nonempty open set $O_F$.
By   weak initial $ \kappa $-compactness,
$ \bigcap _{ \alpha \in \kappa  } C_ \alpha   \not= \emptyset$.

Let $x \in \bigcap _{ \alpha \in \kappa  } C_ \alpha  $.
We are going to show that,
for every neighborhood $U$ of $x$, 
we have that
$|\{ F \in S_ \omega ( \kappa ) \mid U \cap O_ F \not= \emptyset \}| = \kappa $,
thus $X$ is pseudo-$(\kappa,  \kappa  )$-compact,
and the theorem is proved.   

So, let $U$ be a neighborhood  of $x$,
and suppose by contradiction that the cardinality of
$H=\{ F \in S_ \omega ( \kappa ) \mid U \cap O_ F \not= \emptyset \} $
 is $  < \kappa $.
Then 
$|\bigcup H| < \kappa $. 
Choose $\alpha \in \kappa $ such that $ \alpha \not\in \bigcup H$.
Thus if $F \in S_ \omega ( \kappa )$ and $\alpha \in F$, then
$F \not\in H$, hence $U \cap O_ F = \emptyset$.
Then we also get  $U \cap \bigcup  \{ O_F \mid  F \in S_ \omega  ( \kappa  ),  \alpha \in F \} = \emptyset$,
hence $U \cap  C_ \alpha = \emptyset$,
since
$C_ \alpha = \overline{  \bigcup  \{ O_F \mid  F \in S_ \omega  ( \kappa  ),  \alpha \in F \}}$.
In particular, $x \not\in C_ \alpha $. 
 We have reached a contradiction, and the theorem is proved.
\end{proof}  

In fact, our argument gives something more.
Let us say that a topological space is \emph{weakly $[ \lambda ,\kappa ]$-compact} 
if and only if 
 every 
open  cover  of  cardinality  $ \leq \kappa $  has  a  subset 
of cardinality $< \lambda $   with  a  dense  union.
This notion has been studied in \cite{LiF,Lst}, sometimes under the name
$\mathcal O$-$ [ \lambda, \kappa  ]$-compactness. 

For $ \kappa  \geq \lambda  \geq \mu $,
let $\COV( \kappa , \lambda ,\mu )$ denote the minimal cardinality of a family of
subsets of $ \kappa $, each of cardinality  $< \lambda  $, such that every subset of
$ \kappa $ of cardinality $<\mu $ is contained in at least one set of the family.
Highly non trivial results about 
$\COV( \kappa , \lambda ,\mu )$ are proved in \cite{Sh}
under the terminology  $ \cov(\kappa, \lambda ,\mu,2)$.
See \cite[II, Definition 5.1]{Sh}.
  Notice that, trivially,  
$\COV( \kappa , \lambda ,\mu )\leq |S_ \mu ( \kappa )|=\sup _{\mu'<\mu} \kappa ^{\mu'}   $.
In particular, $\COV( \kappa , \lambda , \omega )= \kappa $, hence the next Proposition 
 is stronger than Theorem \ref{solret},
via \cite[Theorem 3(d)]{Re}.

\begin{proposition} \label{gen}
Suppose that
$ \kappa \geq \lambda  \geq \mu $
are infinite cardinals, and either $\kappa> \lambda $, or $\kappa$  is regular.
Then every  weakly $[ \mu ,\kappa ]$-compact topological space is
pseudo-$(\kappa, \COV( \kappa , \lambda ,\mu ))$-compact.
 \end{proposition}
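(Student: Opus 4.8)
The plan is to follow the proof of Theorem~\ref{solret} almost verbatim, with two substitutions. First, in place of the set $S_\omega(\kappa)$ of finite subsets of $\kappa$ I would use a family $\mathcal F$ of subsets of $\kappa$, each of cardinality $<\lambda$, which realizes $\nu:=\COV(\kappa,\lambda,\mu)$; thus $|\mathcal F|=\nu$, and every subset of $\kappa$ of cardinality $<\mu$ is contained in some member of $\mathcal F$. Second, in place of weak initial $\kappa$-compactness I would use the analogue of the complement reformulation recalled in the excerpt: $X$ is weakly $[\mu,\kappa]$-compact if and only if, for every family $(C_\alpha)_{\alpha\in\kappa}$ of closed subsets of $X$ such that $\bigcap_{\alpha\in G}C_\alpha$ contains a nonempty open set whenever $G\subseteq\kappa$ and $|G|<\mu$, the intersection $\bigcap_{\alpha\in\kappa}C_\alpha$ is nonempty (this is proved by passing to complements exactly as there).

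Given a $\nu$-indexed sequence $(O_\beta)_{\beta\in\nu}$ of nonempty open sets, I would fix a bijection between $\nu$ and $\mathcal F$, rewrite the sequence as $(O_F)_{F\in\mathcal F}$, and put $C_\alpha=\overline{\bigcup\{O_F\mid F\in\mathcal F,\ \alpha\in F\}}$ for $\alpha\in\kappa$. If $G\subseteq\kappa$ and $|G|<\mu$, then choosing $F\in\mathcal F$ with $G\subseteq F$ exhibits $O_F$ as a nonempty open subset of $\bigcap_{\alpha\in G}C_\alpha$; hence the reformulation gives some $x\in\bigcap_{\alpha\in\kappa}C_\alpha$. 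It then remains to show, as in the proof of Theorem~\ref{solret}, that for every neighborhood $U$ of $x$ the set $H=\{F\in\mathcal F\mid U\cap O_F\neq\emptyset\}$ has cardinality $\geq\kappa$: otherwise one finds $\alpha\in\kappa\setminus\bigcup H$, so that $U\cap O_F=\emptyset$ for every $F\in\mathcal F$ with $\alpha\in F$, whence $U\cap\bigcup\{O_F\mid F\in\mathcal F,\ \alpha\in F\}=\emptyset$ and therefore $x\notin C_\alpha$, a contradiction. Since $|H|\geq\kappa$ is the same as $|\{\beta\in\nu\mid U\cap O_\beta\neq\emptyset\}|\geq\kappa$, this gives pseudo-$(\kappa,\nu)$-compactness.

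The one step that is not a direct transcription of Theorem~\ref{solret}, and the only place the hypothesis is used, is the passage from $|H|<\kappa$ to $|\bigcup H|<\kappa$ that supplies some $\alpha\in\kappa\setminus\bigcup H$. If $\kappa>\lambda$, then $\bigcup H$ is a union of fewer than $\kappa$ sets, each of cardinality less than $\lambda$, and since $\lambda<\kappa$ as well this union has cardinality less than $\kappa$. If instead $\kappa=\lambda$ is regular, the same conclusion is the standard fact that a union of fewer than $\kappa$ sets of cardinality $<\kappa$ has cardinality $<\kappa$. These are exactly the two alternatives in the statement, and the restriction is genuinely needed: when $\kappa=\lambda$ is singular, $\COV(\kappa,\kappa,\mu)$ can be strictly less than $\kappa$, whereas a one-point space is always weakly $[\mu,\kappa]$-compact yet can never be pseudo-$(\kappa,\theta)$-compact for $\theta<\kappa$. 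I expect this cardinal-arithmetic point to be the only place requiring care; everything else follows the bookkeeping in the proof of Theorem~\ref{solret}. Finally, as already noted in the excerpt, $\COV(\kappa,\lambda,\omega)=\kappa$, so the case $\mu=\omega$ recovers Theorem~\ref{solret} through \cite[Theorem~3(d)]{Re}.
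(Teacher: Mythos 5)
Your proposal is correct and follows essentially the same route as the paper's own proof: replace $S_\omega(\kappa)$ by a witnessing family for $\COV(\kappa,\lambda,\mu)$, run the closed-set reformulation of weak $[\mu,\kappa]$-compactness to get a point in $\bigcap_{\alpha\in\kappa}C_\alpha$, and observe that the only cardinal-arithmetic step needed is that a union of fewer than $\kappa$ sets each of size $<\lambda$ has size $<\kappa$, which is exactly where the hypothesis ``$\kappa>\lambda$ or $\kappa$ regular'' enters. Your added remark that the hypothesis is genuinely necessary (via $\COV(\kappa,\kappa,\mu)<\kappa$ for suitable singular $\kappa$ and the one-point space) goes slightly beyond what the paper records and is a worthwhile observation.
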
 

 \begin{proof} 
The proof is essentially the same as the proof of Theorem 
\ref{solret}.
We shall only point out the differences.
Let $K$ be a subset of  $S_ \lambda ( \kappa )$
witnessing $|K|=  \COV( \kappa , \lambda ,\mu )$. 
Suppose that $X$ is a weakly $[ \mu ,\kappa ]$-compact topological space
and let $(O_ Z) _{ Z \in K  } $ be a sequence of
 nonempty open sets of $X$.
For $ \alpha \in \kappa $, put 
$C_ \alpha = \overline{  \bigcup  \{ O_Z\mid  Z \in K,  \alpha \in Z \}}$.
If $W \subseteq \kappa $, and $|W|< \mu $, then
there is $Z \in K$ such that $Z \supseteq W$, 
so that
$ \bigcap _{ \alpha \in W } C_ \alpha \supseteq 
\bigcap _{ \alpha \in Z } C_ \alpha $
contains the nonempty open set $O _{Z}$, hence, 
by  weak $[ \mu ,\kappa ]$-compactness,
$ \bigcap _{ \alpha \in \kappa  } C_ \alpha   \not= \emptyset$.

Now notice that
the union of $<\kappa$ sets, each of cardinality $<\lambda$,
has cardinality $< \kappa $, and this is the only thing that is used in the 
final part 
of the proof of Theorem \ref{solret}. 
\end{proof} 

For $\kappa$ a regular cardinal, 
 weak $[ \kappa ,\kappa ]$-compactness is equivalent to
pseudo-$(\kappa,  \kappa )$-compactness,
as proved  in \cite{LiF} under different terminology. 

By replacing everywhere nonempty open sets by points in 
Proposition \ref{gen}, we get the following
result which, in the present generality, might be new.

\begin{proposition} \label{genp}
Suppose that
$ \kappa \geq \lambda  \geq \mu $
are infinite cardinals, and either $\kappa> \lambda $, or $\kappa$  is regular,
 and let $\nu=\COV( \kappa , \lambda ,\mu )$.
If $X$ is a $[ \mu ,\kappa ]$-compact topological space, then,
for every  $\nu$-indexed family $(x _ \beta ) _{ \beta \in \nu} $
of elements of $X$, there is some element $x \in X$
such that, for every neighborhood $U$ of $x$, the set  
$\{ \beta \in \nu \mid x_ \beta \in U\}$ has cardinality $ \geq \kappa $. 
 \end{proposition}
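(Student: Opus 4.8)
The plan is to transcribe the proof of Proposition~\ref{gen} almost verbatim, making two substitutions: replace each nonempty open set $O_Z$ by the point $x_Z$, and replace weak $[\mu,\kappa]$-compactness by (genuine) $[\mu,\kappa]$-compactness. Concretely, I would first fix a set $K\subseteq S_\lambda(\kappa)$ with $|K|=\nu=\COV(\kappa,\lambda,\mu)$ witnessing this covering number, so that every subset of $\kappa$ of cardinality $<\mu$ is contained in some member of $K$. Since $|K|=\nu$, I would reindex the given family as $(x_Z)_{Z\in K}$; under the reindexing bijection the set $\{\beta\in\nu\mid x_\beta\in U\}$ has the same cardinality as $\{Z\in K\mid x_Z\in U\}$ for every $U$, so it suffices to work with the reindexed family. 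Then, for $\alpha\in\kappa$, put $C_\alpha=\overline{\{x_Z\mid Z\in K,\ \alpha\in Z\}}$.

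Next I would verify the small-intersection property. If $W\subseteq\kappa$ with $|W|<\mu$, choose $Z\in K$ with $Z\supseteq W$; then $\alpha\in Z$ for every $\alpha\in W$, so $x_Z\in\{x_{Z'}\mid Z'\in K,\ \alpha\in Z'\}\subseteq C_\alpha$ for each such $\alpha$, whence $x_Z\in\bigcap_{\alpha\in W}C_\alpha\neq\emptyset$. (Applied to singletons this also shows each $C_\alpha$ is nonempty; applied to $W=\emptyset$ it is consistent with $X\neq\emptyset$, which holds since $X$ carries the family $(x_\beta)$.) Now the dual form of $[\mu,\kappa]$-compactness — every $\kappa$-indexed family of closed sets all of whose subfamilies of cardinality $<\mu$ have nonempty intersection has nonempty total intersection — produces a point $x\in\bigcap_{\alpha\in\kappa}C_\alpha$. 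It is here that one must use $[\mu,\kappa]$-compactness itself rather than the weak version that sufficed for Proposition~\ref{gen}: we genuinely need a point common to all the $C_\alpha$, not merely a nonempty open subset.

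It then remains to show that every neighborhood $U$ of $x$ satisfies $|\{Z\in K\mid x_Z\in U\}|\geq\kappa$. Assuming not, set $H=\{Z\in K\mid x_Z\in U\}$, so $|H|<\kappa$; each $Z\in H$ has $|Z|<\lambda$, so $|\bigcup H|<\kappa$ by the cardinal arithmetic fact already invoked in the proof of Proposition~\ref{gen} (a union of fewer than $\kappa$ sets, each of cardinality $<\lambda$, has cardinality $<\kappa$, which is exactly where the hypothesis that $\kappa>\lambda$ or $\kappa$ is regular is used). Choose $\alpha\in\kappa\setminus\bigcup H$. Then every $Z\in K$ with $\alpha\in Z$ lies outside $H$, hence $x_Z\notin U$, so $U$ misses $\{x_Z\mid Z\in K,\ \alpha\in Z\}$; since $U$ is a neighborhood of $x$, this gives $x\notin C_\alpha$, contradicting $x\in\bigcap_{\alpha\in\kappa}C_\alpha$.

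I do not expect a real obstacle, since the argument is a transcription of the proof of Proposition~\ref{gen}; the only points warranting care are the correct dualization of $[\mu,\kappa]$-compactness (together with the remark that its weak version no longer suffices here) and the verification that $|\bigcup H|<\kappa$, which consumes the hypothesis on $\kappa$ and $\lambda$.
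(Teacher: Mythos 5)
Your proof is correct and is essentially the paper's own argument: the paper proves Proposition~\ref{genp} precisely by the substitution you describe (replace each nonempty open set $O_Z$ in the proof of Proposition~\ref{gen} by the point $x_Z$, and use the closed-set dual of $[\mu,\kappa]$-compactness in place of the weak version), which is also how it arises as the singleton case of Proposition~\ref{genf}. Your added remarks on where the hypothesis ``$\kappa>\lambda$ or $\kappa$ regular'' is consumed match the paper's reasoning exactly.
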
 

A common generalization of both Propositions \ref{gen} and \ref{genp}
can be given along the abstract framework presented in \cite{LiF,Lst}.  
If $X$ is a 
topological space, and $\mathcal F$ is a family of subsets of $X$, we say that
$X$ is $\mathcal F$-$[ \mu ,\kappa ]$-compact if and only if 
the following holds.
For every sequence $( C _ \alpha ) _{ \alpha \in \kappa   } $
 of closed sets of $X$, if,
for every $Z \subseteq \kappa $ with $|Z|<\mu$,
there exists a set $F_Z \in \mathcal F$    such that   
$ \bigcap _{ \alpha \in Z}  C_ \alpha \supseteq F_Z$,
then  
$ \bigcap _{ \alpha \in \kappa }  C_ \alpha \not= \emptyset $.

\begin{proposition} \label{genf}
Suppose that
$ \kappa \geq \lambda  \geq \mu $
are infinite cardinals, and either $\kappa> \lambda $, or $\kappa$  is regular,
 and let $\nu=\COV( \kappa , \lambda ,\mu )$.
Suppose that $X$ is a 
topological space, and $\mathcal F$ is a family of subsets of $X$.
If $X$ is $\mathcal F$-$[ \mu ,\kappa ]$-compact, then,
for every  $\nu$-indexed family $(F _ \beta ) _{ \beta \in \nu} $
of elements of $\mathcal F$, there is some element $x \in X$
such that, for every neighborhood $U$ of $x$, the set  
$\{ \beta \in \nu \mid F_ \beta \cap U \not= \emptyset  \}$ has cardinality $ \geq \kappa $. 
 \end{proposition}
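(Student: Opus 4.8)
The plan is to repeat the proof of Proposition~\ref{gen} almost word for word, with ``nonempty open set'' replaced by ``member of $\mathcal F$'' throughout and with the abstract notion of $\mathcal F$-$[\mu,\kappa]$-compactness used in place of weak $[\mu,\kappa]$-compactness. First I would fix a subfamily $K \subseteq S_\lambda(\kappa)$ with $|K| = \COV(\kappa,\lambda,\mu) = \nu$ witnessing the definition of $\COV(\kappa,\lambda,\mu)$, so that every $W \subseteq \kappa$ with $|W| < \mu$ is contained in some $Z \in K$. Since $|K| = \nu$, reindex the given family $(F_\beta)_{\beta \in \nu}$ as $(F_Z)_{Z \in K}$, and for each $\alpha \in \kappa$ put $C_\alpha = \overline{\bigcup\{F_Z \mid Z \in K,\ \alpha \in Z\}}$, a closed subset of $X$.

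The second step is to feed the sequence $(C_\alpha)_{\alpha \in \kappa}$ into the hypothesis. If $W \subseteq \kappa$ and $|W| < \mu$, choose $Z \in K$ with $W \subseteq Z$; then for each $\alpha \in Z$ we have $F_Z \subseteq \bigcup\{F_{Z'} \mid Z' \in K,\ \alpha \in Z'\} \subseteq C_\alpha$, whence $\bigcap_{\alpha \in W} C_\alpha \supseteq \bigcap_{\alpha \in Z} C_\alpha \supseteq F_Z$, and $F_Z \in \mathcal F$. Thus $\mathcal F$-$[\mu,\kappa]$-compactness of $X$ yields a point $x \in \bigcap_{\alpha \in \kappa} C_\alpha$.

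The last step is to check that this $x$ is as required. Let $U$ be a neighborhood of $x$ and suppose, for contradiction, that $H = \{Z \in K \mid F_Z \cap U \neq \emptyset\}$ has cardinality $< \kappa$. Each $Z \in H$ has cardinality $< \lambda$, so $\bigcup H$, being a union of fewer than $\kappa$ sets each of size less than $\lambda$, has cardinality $< \kappa$; this is exactly the place where the assumption ``$\kappa > \lambda$ or $\kappa$ regular'' enters, just as in Proposition~\ref{gen}. Pick $\alpha \in \kappa \setminus \bigcup H$. Then every $Z \in K$ with $\alpha \in Z$ lies outside $H$, so $F_Z \cap U = \emptyset$; hence $U$ is disjoint from $\bigcup\{F_Z \mid Z \in K,\ \alpha \in Z\}$, contradicting $x \in \overline{\bigcup\{F_Z \mid Z \in K,\ \alpha \in Z\}} = C_\alpha$. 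Therefore $|H| \geq \kappa$, and carrying this back through the bijection $\nu \to K$ shows $|\{\beta \in \nu \mid F_\beta \cap U \neq \emptyset\}| \geq \kappa$.

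I do not anticipate a real obstacle, since every ingredient already appears in the proofs of Theorem~\ref{solret} and Proposition~\ref{gen}; the only points needing attention are the cardinal-arithmetic estimate $|\bigcup H| < \kappa$ (which genuinely uses the dichotomy on $\kappa$ and $\lambda$, and fails for $\kappa$ singular with $\kappa = \lambda$) and making sure that the abstract compactness hypothesis is applied to a sequence indexed by $\kappa$ with threshold $\mu$, and that one passes to closures when forming the $C_\alpha$, since members of $\mathcal F$ need not be open.
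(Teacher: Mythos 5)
Your proposal is correct and follows essentially the same route as the paper: it is the proof of Proposition~\ref{gen} transported verbatim to the abstract setting, with members of $\mathcal F$ in place of nonempty open sets, the covering family $K$ witnessing $\COV(\kappa,\lambda,\mu)$ used for the reindexing, and the dichotomy ``$\kappa>\lambda$ or $\kappa$ regular'' invoked exactly where the paper invokes it, to bound $|\bigcup H|<\kappa$. No gaps.
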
 

Proposition \ref{gen} is the particular case of Proposition \ref{genf} when we take
$\mathcal F$ to be the family of all nonempty subsets of $X$.  
Proposition \ref{genp} is the particular case of Proposition \ref{genf} when we take
$\mathcal F$ to be the family of all singletons of $X$.

\end{document}